\newcommand{\tr}{^{\prime}}
\def\bd#1{\mbox{\boldmath $#1$}}
\newcommand{\diag}{{\rm diag}}
\def\bl#1{\mbox{\footnotesize \boldmath {$#1$}}} 
\def\m#1{\mbox{#1}}                
\def\cg#1{\mbox{${\cal #1}$}}      
\newcommand{\ci}{\mbox{\protect $\: \perp \hspace{-2.3ex}\perp$ }}
\newtheorem{lemma}{Lemma}
\newtheorem{definition}{Definition}
\newtheorem{proposition}{Proposition}
\newtheorem{example}{Example}
\newtheorem{remark}{Remark}
\author{Antonio Forcina\\ Dipartimento di Economia,
University of Perugia,Italty }
\title{Multiplicative models for frequency data, estimation and testing}
{\markboth{A. Forcina}{Multiplicative models}

\begin{document}
\maketitle
\begin{abstract}
This paper is about models for a vector of probabilities whose elements must have a multiplicative structure and sum to 1 at the same time; in certain applications, as basket analysis, these models may be seen as a constrained version of quasi-independence. After reviewing the basic properties of these models, their geometric features as a curved exponential family are investigated. A new algorithm for computing maximum likelihood estimates is presented and new insights are provided on the underlying geometry. The asymptotic distribution of three statistics for hypothesis testing are derived and a small simulation study is presented to investigate the accuracy of asymptotic approximations.
\end{abstract}
\paragraph{Keywords.}
Relational models,  Curved exponential families, Mixed parametrization, Iterative proportional fitting, Log-linear models, Multinomial distribution
\section{Introduction}
The class of Relational models, introduced by \cite{KRD12} and developed further by \cite{KRipf1} and \cite{KRJMA16}, generalize log-linear models for contingency tables under Poisson or multinomial sampling in several directions. Essentially, the cells of the table to which the models can be applied, do not need to be produced by the cross classification of a set of categorical random variables and may be determined by an arbitrary underlying structure. An obvious instance is when attention is restricted to a specific subset of a cross classification by conditioning or because the table has structural zeros.

The present paper concentrates on what is the most intriguing subclass of relational models: multiplicative models for the vector of probabilities defining a multinomial distribution. Perhaps the main application is the independence model of \cite{AitchSilvey60} applied to basked analysis, where the probabilities of certain cells must be equal to the product of the probabilities of other cells, a relation which conflicts with the requirement that all probabilities have to sum to 1.
Imposing both constraints, that is multiplicative structure and summing to 1, leads to a curved exponential family within the multinomial distribution. The resulting models have some peculiar features, the main one being that, though the vector of mean parameters (in the sense of \cite{Barndorff1978}) of the fitted model is still proportional to the vector of sufficient statistic, as in an ordinary log-linear model, the constant of proportionality is not equal to the sample size. This result, which appeared in  \cite{KRD12}, provides a key to computation and interpretation of maximum likelihood estimated (MLEs).

This paper investigates multiplicative model as a curved exponential family and describes a new procedure for computing the MLEs which exploits advanced results on the multinomial as an exponential family; this formulation provides several new insights both on the geometry of the curved exponential family and on that of the MLE.
In Section 2, after introducing notations and revising the basic properties of the models, their interpretation and geometry is examined in some depth. A new algorithm for computing MLEs is described in Section 3 where certain geometric properties of the MLE are also highlighted. The asymptotic distribution of three alternative statistics which may be used for testing are discussed in section 4 and a small simulation study is presented to investigate the accuracy of the asymptotic approximations for finite sample sizes.

%
\section{Notations and preliminary results}
Let $V$ denote a discrete random variable with outcomes in $\cg V$ = $\{v_1,\dots ,v_r\}$, let $\pi_j=P(V=v_j)$ and $\bd\pi$ denote the vector with elements $\pi_j$, $j=1,\dots ,r$; assume that $\bd\pi$ belongs to $\cg P$ = $\{\bd \pi:\:\pi_j>0,\: \bd 1\tr \bd\pi = 1\}$. Let $Mn(n,\bd\pi)$ denote a multinomial distribution with sample size $n$ and assume that the objective is to make inference about $\bd\pi$ based on the vector of observed frequencies $\bd y$ $\sim$ $Mn(n,\bd\pi)$.

Let $\bd X$ be an $r \times k$ design matrix whose elements are 0 or 1. Assume that $\bd X$ is of full column rank and that the unitary vector $\bd 1$ = $(1, \dots,1)\tr$ does not belong to the space spanned by the columns of $\bd X$; in the terminology of \cite{KRipf1} these are models for probabilities without the overall effect. Assume also that $\bd X\bd 1 \geq \bd 1$, otherwise the model would be incompatible with the definition below, as explained in Remark 1.
\begin{definition}
The vector $\bd\pi$ satisfies a multiplicative model, denoted $\bd\pi\in \cg M(\bd X)$, if
\begin{equation}\label{genll}
\log \bd{\pi} = \bd X \bd \theta,
\end{equation}
where $\bd \theta$ is a vector of $k$ log-linear parameters.
\end{definition}
Multiplicative models under multinomial sampling belong to the class of Relational Models introduced by \cite{KRD12}.
\begin{remark}
If $\bd X$ contained a row of 0's, the corresponding element of $\bd\pi$ would be equal to 1, which conflicts with the requirement that $\bd\pi\in \cg P$. Because the first row in a design matrix $\bd X$ associated with a cross classification under the corner point coding is a row of 0s, multiplicative models can be only applied to incomplete tables where the initial cell (and possibly others) has been removed.
\end{remark}

From linear algebra (\ref{genll}) holds if and only if, any $(r-k)\times r$ matrix $\bd C$ which is of full row rank and such that $\bd C\bd X=\bd 0$, satisfies: $ \bd C \log \bd\pi$ = $\bd 0$. The Proposition below, which could also be derived from \cite{KRD12}, shows that, for any $\cg M(\bd X)$, the constraint matrix $\bd C$ can always be written in a kind of canonical form:
\begin{proposition}\label{oneORtheorem}
Possibly after a non singular linear transformation, the constraint matrix $\bd C$ can be written in the form
$$
\bd C\tr =
\begin{pmatrix} \bd c & \bd H\tr \end{pmatrix}
$$
where the vector $\bd c$ is such that $\bd c\tr\bd 1$ = $-1$ and $\bd H$ is a $(r-k-1)\times r$ matrix of row contrasts, that is $\bd H\bd 1$ = $\bd 0$.
\end{proposition}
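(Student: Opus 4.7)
The plan is to recognize that the row space of $\bd C$ is exactly the orthogonal complement of $\text{col}(\bd X)$ in $\mathbb{R}^r$. Indeed, $\bd C$ has full row rank $r-k$ and $\bd C\bd X=\bd 0$, so its $r-k$ rows span a subspace contained in $\text{col}(\bd X)^\perp$, and by a dimension count the containment is an equality. A non-singular linear transformation $\bd T$ applied on the left, $\bd C\mapsto \bd T\bd C$, is simply a change of basis within this $(r-k)$-dimensional row space; hence proving the proposition amounts to exhibiting a basis of $\text{col}(\bd X)^\perp$ consisting of one vector whose entries sum to $-1$ and $r-k-1$ contrast vectors.

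The crucial step is to consider the linear functional $\phi:\text{col}(\bd X)^\perp\to\mathbb{R}$ defined by $\phi(\bd v)=\bd 1\tr\bd v$, and show $\phi\not\equiv 0$. For this I would argue by contraposition: if $\phi$ vanishes identically, then $\bd 1$ is orthogonal to every vector in $\text{col}(\bd X)^\perp$, which forces $\bd 1\in \text{col}(\bd X)$, contradicting the standing hypothesis that the unit vector does not lie in the span of the columns of $\bd X$. So $\phi$ is a non-zero linear functional on an $(r-k)$-dimensional space, and its kernel $\cg K$ has dimension $r-k-1$.

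Now the canonical form can be assembled: pick any $\bd c\in\text{col}(\bd X)^\perp$ with $\bd 1\tr\bd c\ne 0$, which exists because $\phi$ is non-zero, and rescale so that $\bd 1\tr\bd c=-1$; then choose any basis $\bd h_1,\dots,\bd h_{r-k-1}$ of $\cg K$ and stack them as the rows of $\bd H$, so that $\bd H\bd 1=\bd 0$ by construction. Since $\bd c$ is not in $\cg K$ (because $\phi(\bd c)=-1\ne 0$), the vectors $\bd c,\bd h_1,\dots,\bd h_{r-k-1}$ are linearly independent and therefore form a basis of $\text{col}(\bd X)^\perp$. Consequently there exists a non-singular $(r-k)\times(r-k)$ matrix $\bd T$ with $\bd T\bd C=(\bd c\;\;\bd H\tr)\tr$, which is exactly the asserted canonical form.

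I do not expect a real obstacle here: the whole argument is elementary linear algebra once the row space of $\bd C$ is identified with $\text{col}(\bd X)^\perp$. The only subtlety worth spelling out is the equivalence between the hypothesis $\bd 1\notin\text{col}(\bd X)$ and the non-vanishing of the sum functional on $\text{col}(\bd X)^\perp$, since that is where the assumption "no overall effect" is actually used.
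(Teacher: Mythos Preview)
Your argument is correct and follows essentially the same route as the paper: both hinge on the observation that $\bd 1\notin\text{col}(\bd X)$ forces the row space of $\bd C$ to contain a vector whose entries do not sum to zero, and then separate out that vector from an $(r-k-1)$-dimensional space of contrasts. The only difference is presentational: the paper carries this out by explicit Gaussian-elimination-style row operations on the given $\bd C$ (subtracting a multiple of the non-contrast row from each of the others), whereas you phrase it more abstractly as choosing a basis adapted to the kernel of the sum functional.
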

\begin{proof}
Because $\bd 1$ does not belong to the space generated by the columns of $\bd X$, $\bd C$ must contain at least a row which does not sun to 0; because rows can be permuted, there is no loss of generality in assuming that the first row of $\bd C$, say $\bd c_1\tr$, is such that $\bd c_1\tr\bd 1$ = $t_1\neq 0$. To replace any other row, say $\bd c_a$, with $\bd h_a$ = $\bd c_a - \bd c_1(t_a/t_1)$ is equivalent to left multiply $\bd C$ by a lower triangular matrix which has $1$s on the main diagonal and is thus non-singular. Finally let $\bd c$ = $-\bd c_1/t_1$.
\end{proof}
\subsection{Examples of multiplicative models}
Example 4.1 in \cite{KRD12} describes a multinomial distribution with three cells which, under the given context, has a multiplicative structure; their Example 1.2 instead deals with independence in an incomplete $2\times 2$ table.  An example of a multiplicative model in an incomplete $2^3$ table is given in Example 3 in \cite{AitchSilvey60}; the same example is revisited by \cite{KRJMA16} in their Example 2.1.

It may be worth examining the latter model in more detail in the context of basket analysis; suppose we restrict attention to baskets containing only three specific items, with the empty basket corresponding to a structural zero. Formally, we have the cross classification of three binary variables without the configuration "0,0,0". The model of complete independence in this table is an instance of a quasi-independence (QI) model.  According to \cite{KRJMA16}, in certain contexts, the QI model may not represent adequately the assumption of real interest and suggest considering the constraints proposed by \cite{AitchSilvey60} in their Example 3 which require a multiplicative structure even within incomplete  2-way tables; they call this model AS independence. To clarify the issue, consider Table \ref{T0} where the structure of probabilities satisfying QI and the corresponding AS independence are displayed.
\begin{table}[h]
\caption{ \label{T0} \it Probabilities in an incomplete 3-way table under QI and AS; $a,\:b,\:c$ are parameters and $p,\:q\:,r$ are probabilities}
\begin{center}
\begin{tabular}{lccccccc} \hline
Model & $p_{001}$ & $p_{010}$ & $p_{011}$ & $p_{100}$ & $p_{101}$ & $p_{110}$ & $p_{111}$ \\ \hline
QI & $cs$    & $bs$        & $bcs$      & $as$      & $acs$    &  $abs$      & $abcs$ \\
AS & $r$ & $q$ & $qr$ & $p$ & $pr$ & $pq$ & $pqr$
\\  \hline
\end{tabular}
\end{center}
\end{table}
Essentially, QI forces independence within the 3 complete two-way tables obtained by fixing, in turn, one of the three variable to 1. The $s$ parameter appearing under the QI model in Table \ref{T0} is simply a scaling constant which makes probabilities sum to 1. However, just because of this scaling factor, the multiplicative structure expected under AS independence is violated within each of the three incomplete 2-way tables obtained by conditioning, in turn, each variable to 0.
%
\subsection{Multiplicative models as a curved exponential family}
To see that the multinomial distribution $Mn(n,\bd\pi)$ with $\bd \pi\in \cg M(\bd X)$ is a curved exponential family \citep[Theorem 3.2,][]{KRD12}, note that the model could be imposed in two steps: first assume that
\begin{equation}
\log\bd \pi = \bd X\bd\theta-\bd 1\log(\bd 1\tr\exp(\bd X\bd\theta));
\label{eqEFreg}
\end{equation}
this is a log-linear model defined by the linear restriction $\bd \eta$ = $\bd H\log\bd \pi$ = $\bd 0$ on the canonical parameter. Let $\tilde{\bd\pi}$ denote the MLE under this preliminary model. Next impose the additional constraint $\bd c\tr\log\tilde{\bd\pi}$ = 0. Because $\bd c\tr\bd X$ = $\bd 0\tr$ and $\bd c\tr\bd 1=-1$, this is equivalent to the non linear constraint
\begin{equation}
F(\bd\theta) = \log(\bd 1\tr\exp(\bd X\bd\theta)) = 0;
\label{Implicit}
\end{equation}
let $\cg F(\bd X)$ be the set of $\bd\theta$ which satisfy (\ref{Implicit}). Because $F(\bd\theta)$ is continuous and differentiable, the theory of implicit functions, see for instance \cite{Courant},  insures that, in a neighbourhood of any given  $\bd\theta_0\in\cg F(\bd X)$, the constraint defines a smooth surface which can be parameterised by a $(k-1)$ dimensional vector $\bd\phi$, whose points satisfy (\ref{Implicit}).  To understand the geometry of $\cg F(\bd X)$ better, note that, while any $\bd\theta\in \Re^k$ satisfies (\ref{eqEFreg}),  because $\log \bd\pi$ = $\bd X\bd\theta$, it follows that $\cg F(\bd X) \subset \cg C$, the convex cone defined by $-\bd X\bd\theta  > \bd 0$. Let $\bd U$ be the matrix whose $g$ columns are the edges (generators) of $\cg C$, then an element of $\cg F(\bd X)$ may be generated at random as follows:
\begin{enumerate}
\item
sample the $g$ elements of $\bd q$ from a uniform in (0,1) and then scale them to sum to 1,
then $\bd u$ = $\bd U\bd q$ $\in\cg C$;
\item
find $c$ such that $\log[\bd 1\tr\exp(\bd X\bd uc)]$ = 0, then $\bd\theta$ = $c\bd u\in \cg F(\bd X)$.
\end{enumerate}
In words, for any $\bd u\in \cg C$, there exists $c>0$ such that $\bd\theta$ = $c\bd u\in \cg F(\bd X)$.
\begin{example}
For $r=4$, consider the design matrix given below together with the matrix $\bd U$ whose columns are the edges of the convex cone $\cg C$
$$
\bd X\tr = \begin{pmatrix}
1 & 1 & 1 & 0 \\
0 & 1 & 1 & 1 \\
0 & 0 & 1 & 1  \end{pmatrix}, \quad
\bd U = \begin{pmatrix}
     0  &   0 &   -1\\
     0  &  -1 &    1\\
    -1  &   1 &   -1 \end{pmatrix}.
$$
For a random sample of 6,000 vectors in $\cg C$, the corresponding elements of $\cg F(\bd X)$ are plotted in Figure \ref{Fig1}; points with coordinates greater than 10 in absolute value are omitted to improve visibility. For any $\bd\theta\in \cg F(\bd X)$, the corresponding point with coordinates $\bd\tau$ = $\bd X\tr\bd\pi$ = $\bd X\tr \exp(\bd X\bd\theta)$ are also plotted. The set $\cg F(\bd X)$ appears to be highly non linear while the curvature in the space of $\bd\tau$ is moderate.
\begin{figure}
\centering
\makebox{\includegraphics[width=14cm,height=6cm]{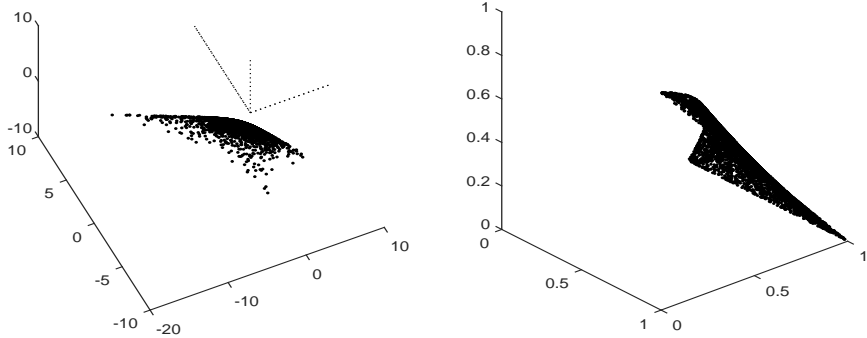}}
\caption{\label{Fig1}A random sample of $\bd\theta\in \cg F(\bd X)$ spanning most of $\cg F(\bd X)$ is plotted on the left panel together with the axis of the positive orthant. The corresponding values of $\bd\tau$ = $\bd X\tr\exp(\bd X\bd\theta)$ are plotted on the left panel.}
\end{figure}
\end{example}

For a given $\bd\theta\in \cg F(\bd X)$, differentiate (\ref{Implicit}) with respect to $\bd\phi$ by the chain rule
$$
\frac{\partial F(\bd\theta)}{\partial \bd\phi} =
\frac{\partial \bd\theta\tr}{\partial\bd\phi} \frac{\partial F(\bd\theta)}{\partial\bd\theta}
=  \m{ (say) } \bd A\tr \bd X\tr \bd\pi = \bd 0,
$$
thus, for $\bd\pi\in \cg M(\bd X)$, $\bd \tau$ = $\bd X\tr\bd\pi$  must be orthogonal to all the $k-1$ columns of $\bd A$, a relation which has important implications for the geometry of MLE discussed in section 3.3.
\section{Maximum likelihood estimation}
Though MLEs for multiplicative models might be computed by general purpose algorithms like the one described by \cite{EvansForcina11},  the approach described by \cite{KRipf1} in the context of Relational models, with suitable adjustments, can be made roughly, equally efficient and provides deeper insights into the nature of these models. The objective of this section is to propose a formal framework for a dual version of the algorithm in \cite{KRipf1}; both algorithms may be seen as repeated applications of the mixed parametrization applied to the Multinomial distributions. This suggests improvements which throw additional light into the geometry of these models.

Let $\bd y$ be an $r\times 1$  vector of frequencies from $Mn(n,\bd\pi)$, let $\bd p$ = $\bd y/n$ be the vector of sample proportions and $\hat{\bd\pi}\in \cg M(\bd X)$ be the vector of MLEs.
The basic result in the Proposition below could also be derived from Theorem 3.3 in \cite{KRD12}.
\begin{proposition}
The likelihood equation can be written in the form
\begin{equation}
\hat\gamma \bd X\tr\bd p = \hat\gamma \bd X\tr\tilde{\bd\pi} = \bd X\tr\hat{\bd\pi},
\label{eqajfac}
\end{equation}
for a suitable constant $\hat\gamma>0$,
\end{proposition}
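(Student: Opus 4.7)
The plan is to view the MLE as a constrained maximizer of the multinomial log-likelihood $\ell(\bd\theta)=\bd y\tr\bd X\bd\theta$---the usual normalising term vanishes along $\cg F(\bd X)$ because feasibility forces $\bd 1\tr\exp(\bd X\bd\theta)=1$---subject to the single scalar constraint $F(\bd\theta)=0$ from \eqref{Implicit}. I would form the Lagrangian $L(\bd\theta,\lambda)=\bd y\tr\bd X\bd\theta-\lambda F(\bd\theta)$ and note that
$$\frac{\partial F}{\partial\bd\theta}=\frac{\bd X\tr\exp(\bd X\bd\theta)}{\bd 1\tr\exp(\bd X\bd\theta)}$$
reduces to $\bd X\tr\bd\pi$ at any feasible point. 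The first-order condition then becomes $\bd X\tr\bd y=\lambda\,\bd X\tr\hat{\bd\pi}$; dividing by $n$ and setting $\hat\gamma=n/\lambda$ gives the right-hand equality of \eqref{eqajfac}.

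To verify $\hat\gamma>0$ I would pre-multiply the identity by $\bd 1\tr$ to obtain $\hat\gamma\,(\bd X\bd 1)\tr\bd p=(\bd X\bd 1)\tr\hat{\bd\pi}$. The standing assumption $\bd X\bd 1\geq\bd 1$ together with the fact that $\bd p$ and $\hat{\bd\pi}$ are probability vectors in $\cg P$ makes each side $\geq 1$ and hence strictly positive; incidentally the same inequality shows $\nabla F=\bd X\tr\bd\pi\neq\bd 0$ at every feasible point, supplying the constraint qualification needed to legitimize the Lagrangian argument.

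The left-hand equality $\bd X\tr\bd p=\bd X\tr\tilde{\bd\pi}$ has nothing to do with the curved constraint: it is the familiar moment-matching likelihood equation for the enveloping log-linear model \eqref{eqEFreg}, which is a regular exponential family with canonical parameter $\bd\theta$ and sufficient statistic $\bd X\tr\bd y$; its MLE $\tilde{\bd\pi}$ therefore satisfies $n\bd X\tr\tilde{\bd\pi}=\bd X\tr\bd y$. Multiplying through by $\hat\gamma$ stitches the two halves together.

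I expect the main obstacle not to be the Lagrangian manipulation, which is essentially routine, but rather to justify existence and interiority of both $\hat{\bd\pi}$ and $\tilde{\bd\pi}$ under a common hypothesis so that both sets of first-order conditions are valid; this is controlled by the standard regular-exponential-family arguments (e.g.\ $\bd y>\bd 0$, or an appropriate face condition on $\bd X\tr\bd p$) and should be recorded explicitly. The conceptual takeaway of the proof is that $\hat\gamma$ is exactly the Lagrange multiplier attached to the curved constraint $F=0$: the ordinary scaling factor $n$ of log-linear models is modulated by its reciprocal, which is what distinguishes multiplicative models from their unrestricted log-linear envelope.
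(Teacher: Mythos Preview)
Your argument is correct and follows essentially the same Lagrangian route as the paper: both impose $F(\bd\theta)=\log(\bd 1\tr\exp(\bd X\bd\theta))=0$ via a multiplier and read $\hat\gamma$ off the stationarity condition, with the left equality coming from the standard likelihood equation for the enveloping log-linear model. The only differences are cosmetic: the paper retains the normalising term $-\log(\bd 1\tr\exp(\bd X\bd\theta))$ in the objective, so its multiplier $\hat\alpha$ is related to $\hat\gamma$ by $\hat\gamma=1/(1+\hat\alpha)$ rather than your $\hat\gamma=n/\lambda$, and for $\hat\gamma>0$ it simply appeals to \cite{KRD12}, whereas your use of $\bd X\bd 1\geq\bd 1$ to get $(\bd X\bd 1)\tr\bd p\geq 1$ and $(\bd X\bd 1)\tr\hat{\bd\pi}\geq 1$ supplies a neat self-contained positivity argument.
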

\begin{proof}
Start from the multinomial likelihood with $\log \bd\pi$ as in (\ref{eqEFreg}) and use the Lagrange multiplier $\alpha$ to account for the additional constraint $\log(\bd 1\tr e^{\bl X\bl\theta})$ = 0. To maximize the scaled lagrangian
$$
L(\bd y;\bd\pi,\alpha)/n = \bd p\tr\bd X\bd\theta - \log(\bd 1\tr\exp(\bd X\bd\theta)) - \alpha(\log(\bd 1\tr\exp(\bd X\bd\theta)) - 0),
$$
differentiate with respect to $\bd\theta$ to obtain
$$
\bd X\tr\bd p - (1+\hat\alpha) \bd X\tr\hat{\bd\pi} = \bd 0.
$$
Now divide both sides by  $\hat\gamma$ = $1/(1+\hat\alpha)$; the fact that $\hat\gamma>0$ follows from \cite{KRD12} who call it the adjustment factor. The first equality is simply to remind that $\bd X\tr\bd p$ = $ \bd X\tr\tilde{\bd\pi}$.
\end{proof}
\begin{remark}
\label{Ralpha}
In the very unlikely event that $\bd 1\tr\exp(\bd X\tilde{\bd\theta})$ = 1, $\hat\gamma=1$, meaning that the normalization factor which appears in the right hand side of  (\ref{eqEFreg}) is redundant.
\end{remark}

The main results of this section relay on the notion of the mixed parametrization for regular exponential families \cite[p. 121-123]{Barndorff1978}, \cite[5.7]{PaSa}. A brief summary of the relevant properties tailored to the given context is provided below
\subsection{The mixed parametrization}
Suppose that the $q\times 1$ vector $\bd v$ follows an exponential family (EF) distribution with canonical parameter $\bd\lambda$ and cumulant generating function $K(\bd\lambda)$. Suppose one is interested to put constraints on a suitable linear transformation of the canonical parameter; a formal treatment, partly similar to the one given below, is in \cite[Section 5]{KRD12}. Let $\bd X,\:\bd C$ be $q\times k$ and $(q-k)\times q$ respectively, of full rank and such that  $\bd C\bd X$ = $\bd 0$; let also $\bd X^-$ = $(\bd X\tr\bd X)^{-1}\bd X\tr$ and $\bd C^-$ = $\bd C\tr(\bd C\bd C\tr)^{-1}$ and define $\bd\theta$ = $\bd X^-\bd\lambda$ and $\bd\eta$ = $\bd C\bd\lambda$.  Simple calculations show that the kernel in the log of the probability distribution may be expanded as
$$
\bd v\tr\bd\lambda -K(\bd\lambda) = \bd v\tr\bd X\bd\theta + \bd v\tr \bd C^-\bd\eta - \tilde K(\bd\theta,\bd\eta).
$$
Let $\bd\mu$ = $E(\bd v)$ and $\bd \tau$ = $\bd X\tr\bd\mu$, this is the expected value of the sufficient statistic for $\bd\theta$ and is known as the {\it mean parameter} component. The pair $(\bd\tau,\:\bd\eta)$ provide a mixed parametrization and has the following properties: (i) the mapping between  $(\bd\tau,\:\bd\eta)$ and $\bd\lambda$ is one to one and differentiable; (ii) the two components $(\bd\tau,\:\bd\eta)$ are variation independent. Note, however, that $\bd\tau$ may have to satisfy specific order constraints induced by $\bd X$.

It is well known that the MLE for a sample of $n$ observation $\bd v_1,\dots , \bd v_n$ under the linear constraint $\bd\eta$ = $\bd 0$ satisfies the likelihood equation
$$
\bd t = \bd X\tr\sum \bd v_i/n = \bd X\tr\bd\mu = \bd\tau;
$$
It can be easily verified that, solving the above equation, is equivalent to apply a mixed parametrization where $\bd\tau$ = $\bd t$ and $\bd\eta=\bd 0$ which induces a mapping between $\bd \tau$ and $\bd\theta$; again this is one to one and differentiable.
\subsection{An algorithm for computing the MLE}
In the notation of Proposition 1, the algorithm described in \cite{KRipf1}, consists in two steps: (i) for fixed $\gamma$, use an iterative proportional fitting (IPF) to solve the equation $\gamma\bd X\tr\bd p$ = $\bd X\tr\bd\pi(\gamma)$ under the full set of constraints $\bd\zeta$ = $\bd C\log\bd\pi(\gamma)$ = $\bd 0$; (ii) search for the value $\hat\gamma$ such that $\bd 1_r\tr\bd\pi(\hat\gamma)$ = 1. A kind of dual procedure may consist of the following: (i) for a given $\bd\gamma$ use IPF to solve $\bd\tau$ = $\gamma\bd X\tr\bd p$ = $\bd X\tr\bd\pi(\gamma)$ under $\bd\eta$ = $\bd H \log\bd\pi (\gamma)$ = $\bd 0$, that is by ignoring the additional constraint $f(\gamma)$ = $\bd c\tr \log\bd\pi$ = 0; (ii) Search for the $\hat \gamma$ such that the function $f(\gamma)$  is equal to 0.

To see that solving (i) in \cite{KRipf1} is equivalent to apply a mixed parametrization to a Poisson distribution with $\bd\tau$ = $\gamma\bd X\tr\bd p$ and $\bd\zeta$ = $\bd 0$, note that the unconstrained canonical parameter has dimension $r$ and the coefficients of any log-linear constraints do not need to sum to 0. This explains why the elements of the solution $\bd\pi(\gamma)$ computed by this approach, generally, do not sum to 1. Similarly, the dual version of (i) proposed above is equivalent to apply a mixed parametrization to a multinomial distribution, because of this, $\bd 1\tr\bd\pi(\gamma)$ = 1 by construction; however, while $\bd\eta$ = $\bd H\log\bd\pi(\gamma)$ is a vector of canonical parameters, the additional log linear constraint $\bd c\tr\log\bd\pi(\gamma)$ = 0 does not correspond to a canonical parameter and leads to a curved exponential family.

Let $\bd F(\gamma)$ = $\bd X\tr[\diag( \bd\pi(\gamma))-\bd \pi(\gamma)\bd \pi(\gamma)\tr]\bd X$; Lemma 2 in \cite{Forcina2012} shows that this matrix is positive definite for all $\bd\pi(\gamma)\in \cg P$. In Appendix 2 of the same paper an alternative to IPF for solving step (i) in either formulations is also outlined. In the present context, it consists in performing the following two steps until convergence:
\begin{enumerate}
\item
given the starting value $\bd\theta(\gamma)^{(s)}$ in the $s$ step, reconstruct the vector of probabilities
$$
\bd\pi(\gamma)^{(s)} = \frac{\exp[\bd X\bd\theta(\gamma)^{(s)}]}{\bd 1\tr\exp[\bd X\bd\theta(\gamma)^{(s)}]};
$$
\item
update parameter estimates
$$
\bd\theta(\gamma)^{(s+1)} = \bd\theta(\gamma)^{(s)} + \left[\bd F(\gamma)^{(s)}\right]^{-1} [\gamma\bd X\tr\bd p-\bd X\tr \bd\pi(\gamma)^{(s)}];
$$
\end{enumerate}
Especially with $r$ large, the above algorithm may be substantially more efficient than IPF which, on the other hand, may be more reliable when elements of $\bd\theta(\gamma)$ get close to the boundary of the parameter space.

To solve step (ii), \cite{KRipf1} propose to start with a pair $\gamma_L^{(0)},\:\gamma_U^{(0)}$ such that $\bd 1\tr\bd\pi(\gamma_L^{(0)})<1$ and $\bd 1\tr \bd\pi (\gamma_U^{(0)}) >1$; at each $\gamma$ adjustment step, compute the midpoint and replace either boundary of the search interval which, in this way, keeps narrowing. Though an analytic expression for the function $f(\gamma)$ = $\bd c\tr\log \bd\pi(\gamma)$ is not available, by exploiting advanced results on the mixed parametrization, Lemma 1 below provides an expression for its derivative and shows that the function is increasing everywhere in its range so that the problem: find $\hat\gamma$ such that $f(\hat\gamma)=0$ may be solved by a Newton algorithm which is considerably faster.

For a given $\hat{\bd\pi} \in \cg M(\bd X)$, let $\bd s$ = $\bd X\tr \hat{\bd\pi}$ and define $\cal G(\bd s)$ to be the set of $\gamma$s for which there exist $\bd p \in \cg P$ such that $\gamma\bd X\tr\bd p$ = $\bd s$. A value of $\gamma\in \cal G(\bd s)$ will be called feasible.
\begin{lemma} For all $\gamma\in \cg G(\bd s)$, $f(\gamma)$ is differentiable with:
$$
\frac{\partial f(\gamma)}{\partial\gamma} = \gamma \bd t\tr \bd F(\gamma)^{-1}\bd t;
$$
because $ \bd F(\gamma)$ is positive definite everywhere, $f(\gamma)$ is strictly increasing.
\end{lemma}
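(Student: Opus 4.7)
The plan is to use the implicit differentiation that the mixed parametrization makes available. From the discussion preceding the lemma, for each feasible $\gamma$ the vector $\bd{\pi}(\gamma)$ is obtained by applying the multinomial mixed parametrization with mean component $\bd{\tau} = \gamma \bd{X}\tr\bd{p}$ and canonical component $\bd{\eta} = \bd{H}\log\bd{\pi}(\gamma) = \bd{0}$. Property (i) of the mixed parametrization (the mapping between $(\bd{\tau},\bd{\eta})$ and the canonical parameter is one-to-one and differentiable) guarantees that the composite map $\gamma \mapsto \bd{\theta}(\gamma) \mapsto \bd{\pi}(\gamma)$ is smooth on $\cg{G}(\bd{s})$, so $f(\gamma) = \bd{c}\tr\log\bd{\pi}(\gamma)$ is differentiable there.

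Next I would compute $\partial\bd{\theta}/\partial\gamma$ by differentiating the defining identity $\gamma\bd{X}\tr\bd{p} = \bd{X}\tr\bd{\pi}(\gamma)$. Using the multinomial fact $\partial\bd{\pi}/\partial\bd{\theta}\tr = [\diag(\bd{\pi}) - \bd{\pi}\bd{\pi}\tr]\bd{X}$, the right-hand side differentiates to $\bd{F}(\gamma)\,\partial\bd{\theta}/\partial\gamma$, so
$$
\frac{\partial\bd{\theta}}{\partial\gamma} = \bd{F}(\gamma)^{-1}\bd{t},
$$
which is well-defined since Lemma 2 of Forcina (2012), as invoked in the text, guarantees that $\bd{F}(\gamma)$ is positive definite.

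Then I would apply the chain rule to $f(\gamma) = \bd{c}\tr\log\bd{\pi}(\gamma)$. Writing $\log\bd{\pi}(\gamma) = \bd{X}\bd{\theta}(\gamma) - \bd{1}\log(\bd{1}\tr\exp(\bd{X}\bd{\theta}(\gamma)))$ as in (\ref{eqEFreg}), the derivative with respect to $\gamma$ is
$$
\frac{\partial\log\bd{\pi}(\gamma)}{\partial\gamma} = \bigl[\bd{X} - \bd{1}\bd{\pi}(\gamma)\tr\bd{X}\bigr]\frac{\partial\bd{\theta}}{\partial\gamma}.
$$
The algebraic simplification comes from Proposition \ref{oneORtheorem}: because $\bd{c}\tr\bd{X} = \bd{0}\tr$ and $\bd{c}\tr\bd{1} = -1$, premultiplying by $\bd{c}\tr$ kills the first term and flips the sign on the second, giving $\bd{c}\tr[\bd{X} - \bd{1}\bd{\pi}(\gamma)\tr\bd{X}] = \bd{\pi}(\gamma)\tr\bd{X}$. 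Combining with the previous display yields $\partial f/\partial\gamma = \bd{\pi}(\gamma)\tr\bd{X}\,\bd{F}(\gamma)^{-1}\bd{t}$, and finally substituting the defining identity $\bd{X}\tr\bd{\pi}(\gamma) = \gamma\bd{t}$ produces the claimed expression $\gamma\,\bd{t}\tr\bd{F}(\gamma)^{-1}\bd{t}$. Strict monotonicity is then immediate: $\bd{F}(\gamma)^{-1}$ is positive definite, $\gamma > 0$ on $\cg{G}(\bd{s})$, and $\bd{t} = \bd{X}\tr\bd{p} \neq \bd{0}$ under the full-column-rank assumption on $\bd{X}$.

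The only step that requires any attention is the algebraic collapse in the chain-rule calculation; once the two defining relations of $\bd{c}$ from Proposition \ref{oneORtheorem} are recognised as exactly what is needed to eliminate the canonical part and convert the log-normalizer contribution into $\bd{\pi}(\gamma)\tr\bd{X}$, everything else is routine. No delicate boundary issue arises because $\gamma$ is restricted to the open feasible set $\cg{G}(\bd{s})$ on which $\bd{\pi}(\gamma) \in \cg{P}$ and hence $\bd{F}(\gamma)$ is invertible.
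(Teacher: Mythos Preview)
Your proof is correct and follows essentially the same chain-rule computation as the paper: both obtain $\partial\bd\theta/\partial\gamma=\bd F(\gamma)^{-1}\bd t$ from implicit differentiation of $\gamma\bd t=\bd X\tr\bd\pi(\gamma)$ and then multiply by $\partial f/\partial\bd\theta=\bd X\tr\bd\pi(\gamma)=\gamma\bd t$. The only cosmetic difference is that the paper works directly with the log-normalizer form $f(\gamma)=\log[\bd 1\tr\exp(\bd X\bd\theta(\gamma))]$, whereas you start from $f(\gamma)=\bd c\tr\log\bd\pi(\gamma)$ and invoke the relations $\bd c\tr\bd X=\bd 0$, $\bd c\tr\bd 1=-1$ from Proposition~\ref{oneORtheorem} to arrive at the same derivative; since those two relations are exactly what make the two representations of $f$ coincide, this is not a different route but a slight detour through an equivalent expression.
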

\begin{proof}
See the Appendix.
\end{proof}
Lemma 1 may be used to compute a local inverse of $f(\gamma)$, this, combined with the mixed parametrization, provides the basis of an algorithms for computing the MLE of $\bd\pi\in\cg M(\bd X)$. One can start with $\gamma=1$ which is always feasible and continue with the following steps:
\begin{enumerate}
\item in the $s$th step, given $\gamma^{s}$, use the Newton algorithm to reconstruct the vector of probabilities from the mixed parametrization with mean component $\bd X\tr \bd \pi(\gamma^s)$ =  $\gamma^s\bd X\tr\bd p$ and the canonical parameters $\bd H\log \bd \pi(\gamma^s)$ = $\bd 0$;
\item use Lemma  1 to update $\gamma$:
$\gamma^{s+1} = \gamma^s-f(\gamma^s) / [\gamma^s\bd t\tr \bd F(\gamma^s)^{-1}\bd t]$;
\item iterate until $f(\gamma^s)$ is sufficiently close to 0.
\end{enumerate}
The algorithm usually takes few $\gamma$-updating steps to reach convergence; it may be wise, initially, to shorten the step length when updating $\gamma$  to avoid hitting into a non feasible value. A set of {\sc Matlab} functions to perform the tasks described above are available from the author.
\subsection{The geometry of MLEs}
Recall that a model defined by (\ref{eqEFreg}) alone may be interpreted as a log-linear model for a multinomial distribution and its MLEs are determined only by $\bd t$ = $\bd X\tr\bd p$, the vector of sufficient statistics computed on the sample proportions. For fixed $\bd t$, all vectors of sample proportions $\bd p$ which satisfy $\bd X\tr \bd p$ = $\bd t$ have the same MLEs; this follows from the fact that, within the mixed parametrization, the mapping between $\bd t$ and $\tilde{\bd\theta}$ is one to one

For fixed $\bd s$ = $\bd X\tr\hat{\bd\pi}$, any vector of sufficient statistics $\bd t$ such that $\gamma\bd t$ = $\bd s$, for any$\gamma \in \cg G(\bd s)$, leads to the same MLE under $\cg M(\bd X)$. This, again, may be derived from the mixed parametrization by noting that, for fixed  $\bd\eta$ = $\bd H\bd\lambda$ = $\bd 0$, $\hat{\bd\pi}$ is determined uniquely by $\bd s$. Now consider the space whose coordinates are the sufficient statistics $\bd t$ = $\bd X\tr\bd p$ and define the set $\cg T(\bd s)$
\begin{equation}
\bd t\in\cg T(\bd s)\m{ if } \exists\: \gamma\in \cg G(\bd s) \m { such that } \bd t = \bd s /\gamma.
\label{SufStat}
\end{equation}
In this space $\cg T(\bd s)$ is contained in the straight line joining the origin with the point $\bd s$ and is bounded by the constraint $\gamma\in \cg G(\bd s)$ induced by the fact that $\bd p\in \cg P$. It may be of interest to clarify the relation between (\ref{SufStat}) and equation (3.3) in Efron which applies to any curved exponential family defined by a smooth mapping $\bd\theta(\bd\phi)$ where the parameter vector $\bd\phi\in \cg F\subseteq\Re^q$ with $q<k$. Consider the likelihood equation for the multinomial distribution as a (general) curved exponential family, together with the equation obtained by differentiating the non homogeneous constraint $F(\bd\theta)$ with respect to $\bd\phi$, by the chain rule
$$
\left\{
\begin{array}{l}
\frac{\partial\bd\theta\tr}{\partial\bd\phi}\bd X\tr (\bd p-\bd \pi )=  \bd A\tr (\bd t - \bd s) = \bd 0\\
\frac{\partial\bd\theta\tr}{\partial\bd\phi} \frac{\partial}{\partial\bd\theta}
\log[\bd 1\tr\exp(\bd X\hat{\bd\theta})] = \bd A\tr \bd s = \bd 0,
\end{array}
\right.
$$
like (3.3) in \cite{Efron78}, the first equation indicates that the vector $\bd t - \bd s$ must be orthogonal to the columns of the jacobian matrix $\bd A$ while the second says that $\bd s$ itself must satisfy the same orthogonality property. The two equations together imply (\ref{SufStat}) because, if both $\bd s$ and $\bd t-\bd s$ must be orthogonal to the columns of $\bd A$, $\bd t$ must be proportional to $\bd s$.
\begin{example}
Consider again the same design matrix as in Example 1.
Because $\bd \pi\in \cg P$, the fourth dimension is redundant and sample proportions can be plotted in three dimensions. For each $\bd p$ on a grid within $\cg P$, the resulting $\hat{\bd\pi}$ is plotted in Figure 1, left panel, together with the corresponding value of $\bd s$ = $\bd X\tr\hat{\bd\pi}$, right panel. Each set of points outline a corresponding two dimensional surfaces.
\begin{figure}
\centering
\makebox{\includegraphics[width=14cm,height=6cm]{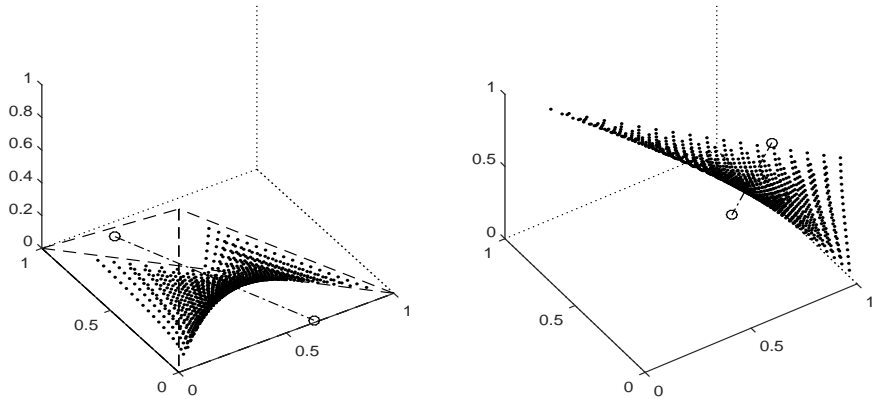}}
\caption{\label{Fig2}For each vector of sample proportions belonging to a grid with step size 0.05 in each direction within $\cg P$, the first three coordinates of $\hat{\bd\pi}$ are plotted on the left panel; the corresponding values of $\bd s$ are plotted on the right hand side. For a specific value of $\hat{\bd\pi}$, the linear set $\gamma\bd X\tr \hat{\bd\pi}$, $\gamma\in\cg G(\bd s)$ is plotted on the right hand side by a dash-dot line with circles marking end points. The corresponding set $\cg T(\bd s)$ is also plotted on the left panel.}
\end{figure}
\end{example}
The pair $(\bd s,\:\gamma)$ determines a single element of $\cg T(\bd s)$; among these, there are two special points worth mentioning: (i) the point with $\bd t=\bd s$ $\Leftrightarrow$ $\gamma=1$ which arises in the very unlikely event that $\tilde{\bd\pi}$, the MLE under the log linear model, belongs to $\cg M(\bd X)$;
(ii) the point corresponding to the largest feasible adjustment factor, say, $\gamma_U$, which lies on the boundary of the sample space closest to the origin. The results on the asymptotic distribution of $\hat\gamma$ in the next section may be used to determine an acceptance region for $\cg M(\bd X)$ along the line $\bd t$ = $\bd s/\gamma$.

The following Proposition shows that, for fixed $n$ and $\bd s$, the likelihood is an inverse function of $\gamma$, thus it is maximum when $\bd t$ = $\bd s/\gamma_U$.
\begin{proposition}
\label{P3}
For $n,\:\bd s$ fixed, the log-likelihood $\bd y\tr\log \hat{\bd\pi}$ achieves its maximum when $\hat\gamma$ = $\gamma_U$.
\end{proposition}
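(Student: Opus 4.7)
The plan is to show that once $\bd s$ is fixed, both $\hat{\bd\pi}$ and $\hat{\bd\theta}$ are themselves fixed, so that $\bd y\tr\log\hat{\bd\pi}$ becomes an explicit function of $\hat\gamma$ alone whose monotonicity can be read off directly.

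First I would use $\hat{\bd\pi}\in\cg M(\bd X)$, which gives $\log\hat{\bd\pi}=\bd X\hat{\bd\theta}$, to write
$$
\bd y\tr\log\hat{\bd\pi} \;=\; n\bd p\tr\bd X\hat{\bd\theta} \;=\; n\bd t\tr\hat{\bd\theta},
$$
and then substitute the likelihood equation $\hat\gamma\,\bd t=\bd s$ of Proposition 2 to obtain
$$
\bd y\tr\log\hat{\bd\pi} \;=\; \frac{n}{\hat\gamma}\,\bd s\tr\hat{\bd\theta}.
$$

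Next, I would appeal to the mixed parametrization recalled in Section 3.1: the pair $(\bd\tau,\bd\eta)=(\bd s,\bd 0)$ maps one-to-one onto the canonical parameter, hence fixes $\hat{\bd\theta}$ and $\hat{\bd\pi}$ once $\bd s$ is given. Thus for fixed $\bd s$ the scalar $\bd s\tr\hat{\bd\theta}$ does not depend on which $\bd t\in\cg T(\bd s)$ was observed, and the entire dependence of the log-likelihood on the data is carried by the factor $n/\hat\gamma$.

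Finally, I would determine the sign of $\bd s\tr\hat{\bd\theta}$ via
$$
\bd s\tr\hat{\bd\theta} \;=\; \hat{\bd\pi}\tr\bd X\hat{\bd\theta} \;=\; \hat{\bd\pi}\tr\log\hat{\bd\pi} \;=\; \sum_j\hat\pi_j\log\hat\pi_j,
$$
which is the negative Shannon entropy of $\hat{\bd\pi}\in\cg P$; since every $\hat\pi_j\in(0,1)$ this quantity is strictly negative. Consequently $(n/\hat\gamma)\,\bd s\tr\hat{\bd\theta}$ is strictly increasing in $\hat\gamma$ on the feasible range $\cg G(\bd s)$ and attains its maximum at the upper endpoint $\hat\gamma=\gamma_U$. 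The only delicate step is confirming that $\hat{\bd\theta}$ is independent of $\hat\gamma$ with $\bd s$ held fixed; this is an immediate consequence of the one-to-one correspondence $(\bd\tau,\bd\eta)\leftrightarrow\bd\lambda$ applied with $\bd\eta=\bd H\log\hat{\bd\pi}=\bd 0$ maintained throughout, so no real obstacle remains beyond a line of algebra.
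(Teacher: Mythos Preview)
Your argument is correct and follows exactly the same route as the paper: write $\bd y\tr\log\hat{\bd\pi}=n\bd t\tr\hat{\bd\theta}=n\bd s\tr\hat{\bd\theta}/\hat\gamma$ and observe that the numerator is negative so the maximum occurs at $\gamma_U$. You are in fact more explicit than the paper, which simply asserts ``the numerator is negative''; your identification $\bd s\tr\hat{\bd\theta}=\hat{\bd\pi}\tr\log\hat{\bd\pi}<0$ and your appeal to the mixed parametrization to fix $\hat{\bd\theta}$ once $\bd s$ is fixed supply the justifications the paper leaves implicit.
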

\begin{proof}
Replace $\log\hat{\bd \pi}$ with $\bd X\hat{\bd\theta}$, and use (\ref{SufStat})
$$
L = \bd y\tr \log \hat{\bd\pi} = n\bd p\tr \bd X \hat{\bd\theta} = n\bd t\tr\hat{\bd\theta} = \bd s\tr\hat{\bd\theta}/\hat\gamma.
$$
Because the numerator is negative, the maximum is achieved when $\hat\gamma$ is largest.
\end{proof}
\section{Hypothesi testing}
When assessing the performance of multiplicative models, it seems reasonable to test first whether the log-linear model (\ref{eqEFreg}) fits well and, only when this test is passed, consider fitting $\cg M(\bd X)$.  Let
$$
D_L = 2\bd y'[\log(\bd y/n) - \log\tilde{\bd\pi}]; \quad D_M = 2\bd y'[\log(\tilde{\bd\pi}) - \log\hat{\bd\pi}].
$$
It follows from basic results on log-linear models that the asymptotic distribution of $D_L$ is $\chi^2_{r-k-1}$. Considering that $\cg M(\bd X)$ may be obtained from (\ref{eqEFreg}) by imposing the smooth non-linear constrain $\log[\bd 1\tr \exp(\bd X\bd\theta)]=0$, it follows from general results on the asymptotic distribution of the likelihood ratio that, when the sample size goes to infinity, $D_M$ $\sim$ $\chi^2_{1}$. It may be of interest to examine the behaviour of $D_M$ as a function of $\gamma$ for $t\in \cg T(\bd s)$; suppose that $\bd t$ = $\bd s/\gamma$ is the observed vector of sufficient statistics and $\bd\theta(\gamma)$ is the MLE under the log-linear model (\ref{eqEFreg}). Recall that, in Lemma 1, the scaling factor for probabilities estimated under the log-linear model was called $f(\gamma)$ and shown to be increasing everywhere for fixed $\bd t$. Let $g(\gamma)$ denote the scaling factor as a function of $\gamma$ for fixed $\bd s$; in the proof of Lemma 1 it is shown that $g(\gamma)$ is strictly decreasing everywhere in $\cg G(\bd s)$. By simple calculations,
$$
D_M(\gamma)= 2 n\bd p\tr\left[\bd X\bd\theta(\gamma)-g(\gamma) -\bd X\hat{\bd\theta}) \right]\\
= 2n\left[\bd s\tr(\bd\theta(\gamma) -\hat{\bd\theta})/\gamma - g(\gamma)\right].
$$
A plot of $D_M(\gamma)$ and $g(\gamma)$ as functions of $\gamma$ are given in Example \ref{ASind} below where it can be seen that $D_M(\gamma)$ is approximately quadratic in an interval around $\gamma=1$ while $g(\gamma)$ is 0 at 1 and its slope get very steep near the boundaries of $\cg G(\bd s)$.

Within the more general context of Relational models, \cite{KRarxiv16} study the asymptotic distribution of the Bergman statistic and show that it is asymptotically equivalent to the distribution of $D_L + D_M$; they also show that the distribution of the parameter estimates converge to normality. Similar conclusions concerning the degrees of freedom for the LRs statistics above can also be derived from the results of \cite{KRD12}, section 2.

The problem considered here may also be seen as a special instance of the one studied by \cite{AitchSilvey58} who consider testing a set of smooth non linear equality constraints. In the context of multiplicative models  there is just one constraint of the form
$$
F(\bd\theta) = \log\left[\bd 1\tr\exp(\bd X\bd\theta)\right] = 0;
$$
because the elements of the jacobian $\bd H$ = $\partial F / \partial\bd\theta $ = $\bd X\tr\bd\pi$  are strictly positive and differentiable, it follows \citep[][Therem 2, p. 824]{AitchSilvey58} that, if model $\cg M(\bd X)$ is true, the estimate of the Lagrange multiplier $\alpha$ and that of the adjustment factor $\gamma$ have both an asymptotic normal distribution
\begin{equation}
\hat\alpha  \sim N(0,\sigma^2_{\hat\alpha}),\quad
\hat\gamma  \sim N(1,\sigma^2_{\hat\gamma}) \m{ where }\sigma^2_{\hat\alpha} = [n \hat{\bd\pi}\tr\bd X \bd F(\hat\gamma)^{-1} \bd X\tr \hat{\bd\pi}]^{-1}
\label{ASint}
\end{equation}
and $\sigma^2_{\hat\gamma}$ = $\sigma^2_{\hat\alpha}/\gamma^4$,
where both variances have been computed by the Delta method. These results lead to two additional statistics which are asymptotically equivalent to $D_M$, $L$ = $\hat\alpha^2/\sigma^2_{\hat\alpha}$ and $G$ = $(\hat\gamma-1)^2/\sigma^2_{\hat\gamma}$.

The asymptotic distribution of $\hat\gamma$ may be used to determine a kind of acceptance region around $\gamma=1$ which corresponds to an interval along the line $\bd t$ = $\bd s/\gamma$ of sample statistics leading to the same MLE: only points $\bd t$ sufficiently close to $\bd s$ are compatible with $\cg M(\bd X)$.

\begin{example} \label{ASind}
Table \ref{tab:2} contains the frequency distribution of baskets containing only one of the following items: biscuits, milk, tomato sauce. The original data-set, described in \cite{Giudici02}, is about the presence/absence of 20 items in each of the over 46,000 baskets passing at the counters of a food store in a given period. The data used here is obtained by conditioning to the remaining 17 items being absent, a restriction satisfied by about 13\% of the total.
\begin{table}[ht]
\caption{\label{tab:2} Frequency distribution of baskets with at most three items, including: b = biscuits, m = milk, t = tomato souce}
\centering
\fbox{
\begin{tabular}{lrrrrrrr} \hline
Composition & t & m & mt & b  & bt & bm & bmt \\
 \hline
number & 374 &  3684 &  233  & 991 & 41 &  607 & 46 \\
\hline
\end{tabular}}
\end{table}
The $\bd X$ matrix for the QI and AS independence model described, for instance, by \cite{KRJMA16}, is given below
$$
\bd X\tr = \begin{pmatrix}
     0  &   0 &  0 &  1 & 1 & 1 & 1 \\
     0  &   1 &  1 &  0 & 0 & 1 & 1 \\
     1  &   0 &  1 &  0 & 1 & 0 & 1
\end{pmatrix}.
$$
The QI model cannot be rejected: deviance of 9.14 on 4 degrees of freedom with a $p$-value of 0.055. Given QI, the model of AS independence fits almost perfectly with $D_M$ = 0.02 and $\hat\gamma$ = 0.9994.  In this case $\hat\gamma$ is well inside the acceptance region based on the asymptotic normal distribution (0.9923, 1.0077).  Clearly these data concern a very special subset of customers. For $\bd s$ fixed to the value determined by these data, the values of $D_M(\gamma)$ on an interval around $\bd s$ is plotted in Figure \ref{Fig3} together with the 95\% likelihood ratio based confidence interval for $\gamma$. The plot of $g(\gamma)$ is approximately linear around $\gamma=1$ but curvature increases when $\gamma$ approaches the boundaries of $\cg G(\bd s)$ which, in this instance, are 0.765 and  1.162).
\begin{figure}
\centering
\makebox{\includegraphics[width=16cm,height=6cm]{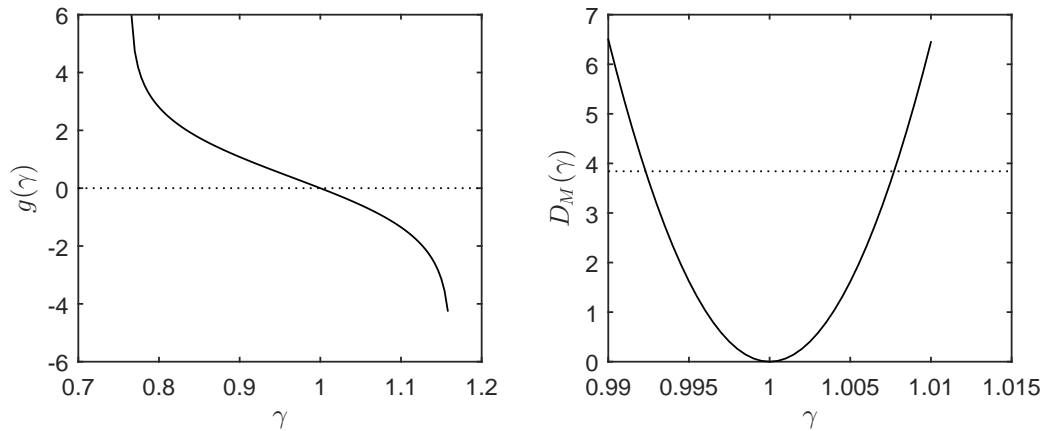}}
\caption{\label{Fig3} For $n$ and $\bd s$ fixed to the value in the basked data, the scaling factor $g(\gamma)$ is plotted on the left panel and that of  $D_M(\gamma)$ is plotted on the right panel for a small range of $\gamma$ around 1.}
\end{figure}
\end{example}
\subsection{A simulation study}
A small simulation study was performed to compare the accuracy of the asymptotic approximations of the LR, the Lagrange multiplier test and that of the adjustment factor. The AS independence model fitted in example \ref{ASind} above was taken as the null distribution. Sample sizes of 200, 1,000 and 5,000 were used with 40,000 replications for each sample size. The estimated rejection rates for the three statistics are displayed in Table \ref{tab:3}.
\begin{table}[htb]
\caption{\label{tab:3} Estimated rejection rates under $H_0$ $\bd\pi \in \cg M(\bd X)$}
\centering
\fbox{
\begin{tabular}{lrrrrrrrrr} \hline
sample size & \multicolumn{3}{c}{10\%} &  \multicolumn{3}{c}{5\%} & \multicolumn{3}{c}{1\%} \\
$n$ & $D_M$ & $L$ & $G$ & $D_M$ & $L$ & $G$ & $D_M$ & $L$ & $G$ \\
 \hline
  200&   10.05 & 10.05 & 9.78 & 5.09 & 5.00 & 4.97 & 0.95 & 0.96 & 1.14 \\
1,000&    9.99 &  9.95 & 9.93 & 4.97 & 4.98 & 4.92 & 1.05 & 1.05 & 1.04 \\
5,000&   10.01 &  9.99 & 9.98 & 4.94 & 4.95 & 4.96 & 0.99 & 0.97 & 1.03 \\
\hline
\end{tabular}}
\end{table}
On the whole, by looking at averages of absolute errors, the rejection rates of $D_M$ are the closest to their nominal value while those of $G$ are the worst. By the same criterion, the distributions of the three statistics get closer to $\chi^2_1$ as the sample size increases.
\section*{Acknowledgments}
The author would like to thank A. Klimova and T. Rudas for sharing ideas concerning Relational models and for several very enlightening discussions and A. Salvan for comment on the nature of the curved exponential family.
\appendix
\section*{Appendix}
\subsection*{Multinomial and Poisson as exponential families}
Let $\bd v$ $\sim$ Mn$(n,\bd\pi)$ where $\bd\pi$ has dimension $q$; a multivariate logistic transform of $\bd\pi$ may be defined as $\log\bd\pi$ = $\bd G\bd\lambda-\bd 1_q\log[\bd 1_q\tr\exp(\bd G\bd\lambda)]$, where $\bd\lambda$ is a vector of canonical parameters determined by $\bd G$, an arbitrary $q \times (q-1)$ matrix of full rank whose columns do not span the unitary vector.
The kernel of the log of the probability distribution may be written as
$$
\bd v\tr\bd\log\bd\pi = \bd v\tr\bd G\bd \lambda - n\log[\bd 1_q\tr\exp(\bd G\bd\lambda)],
$$
then $\bd t$ = $\bd G\tr\bd v$ is the vector of sufficient statistics and $K(\bd\lambda)$ = $n\log[\bd 1\tr\exp(\bd G\bd\lambda)]$.

To derive an explicit  expression for $\bd\lambda$, let $\bd R$ = $\bd I_q-\bd 1_q\bd 1_q\tr/q$ and
$$
\bd D = (\bd G\tr \bd R \bd G)^{-1} \bd G\tr \bd R; \quad \Rightarrow \bd D\bd G =\bd I_q,\quad
\bd D\bd 1_q=\bd 0_q,
$$
then $\bd\lambda$ = $\bd D\log\bd\pi$ ia s vector of $q-1$ canonical parameters. To see why the coefficient of any linear constraint on canonical parameters must sum to 0, note that $\bd D\bd 1_q$ = $\bd 0_{q-1}$.
To introduce linear restrictions on $\bd\lambda$, assume that $\bd G$ is partitioned as $(\bd X\:\: \bd Z)$, where $\bd Z$ is such that  $\bd Z\tr \bd R\bd X$ = $\bd 0$, let also $\bd H$ = $(\bd Z\tr \bd R\bd Z)^{-1}\bd Z\tr\bd R$; now define $\bd\eta$ = $\bd H\bd\lambda$. Then the model $\bd\lambda$ = $\bd X\bd\theta$ is equivalent to assume that $\bd\eta$ = $\bd 0$.

If, instead, the elements of $\bd v$ were distributed as $q$ independent Poisson variables, the kernel of the log of the probability distribution would be
$$
\bd v\tr\log\bd \mu-\bd 1\tr\bd\mu = \bd y\tr\bd \lambda -K(\bd\lambda),
$$
where $\bd\lambda$ = $\log\bd\mu$ and $K(\bd\lambda)$ = $\bd 1\tr \exp(\bd\lambda)$
\subsection*{Proof of Lemma 1}
To differentiate $f(\gamma)$ = $\log[\bd 1\tr\exp(\bd X\bd\theta(\bd\gamma))]$ note that (\ref{eqajfac}) implies $\bd\tau(\gamma)$ = $\bd X\tr\bd \pi(\gamma)$ = $\gamma\bd X\tr\bd p$. By the chain rule
$$
\frac{\partial f(\gamma)}{\partial\gamma} =
\frac{\partial f/\gamma)}{\partial\bd\theta(\gamma)\tr}
\frac{\partial \bd\theta(\gamma)}{\partial \bd\tau(\gamma)\tr}
\frac{\partial \bd\tau(\gamma)}{\partial\gamma} =
\frac{\exp(\bd X\bd\theta(\gamma))\tr}{\bd 1\tr\exp(\bd X \bd\theta(\gamma))} \bd X \frac{\partial\bd\theta(\gamma)}{\partial\bd\tau(\gamma)\tr}\bd X\tr\bd p.
$$
The result follows because, by construction, $\bd X\tr \exp(\bd X\bd\theta(\gamma)) /[\bd 1\tr\exp(\bd X\bd\theta(\gamma))]$ = $\bd \tau(\gamma)$ = $\gamma \bd X\tr \bd p$ and
$$
\frac{\partial\bd\theta(\gamma)}{\partial \bd\tau(\gamma)\tr} =
\left(\frac{\partial\bd\tau(\gamma)}{\partial \bd\theta(\gamma)\tr}\right)^{-1} =
\bd X\tr\frac{\partial\bd \pi(\gamma)}{\partial (\bd X\bd\theta(\gamma))\tr}\bd X = \bd F(\gamma).
$$
Differentiation of the function $g(\gamma)$ is similar, except that, because $\bd\tau(\gamma)$ = $\bd s/\gamma$, the last component in the derivative is $-\bd s/\gamma^2$.

\bibliographystyle{apalike}
\bibliography{Oviedo}

\begin{thebibliography}{}

\bibitem[Aitchison and Silvey, 1958]{AitchSilvey58}
Aitchison, J. and Silvey, S.~D. (1958).
\newblock {M}aximum-likelihood estimation of parameters subject to restraints.
\newblock {\em The Annals of Mathematical Statistics}, 29:813--828.

\bibitem[Aitchison and Silvey, 1960]{AitchSilvey60}
Aitchison, J. and Silvey, S.~D. (1960).
\newblock {M}aximum-likelihood estimation procedures and associated tests of
  significance.
\newblock {\em J. Roy. Statist. Soc. Ser.B}, 22:154--171.

\bibitem[Barndorff-Nielsen, 1978]{Barndorff1978}
Barndorff-Nielsen, O.~E. (1978).
\newblock {\em {I}nformation and exponential families}.
\newblock Wiley, New York.

\bibitem[Courant, 1964]{Courant}
Courant, R. (1964).
\newblock {\em Differential and integral calculus}.
\newblock Blacks and Sons, London.

\bibitem[Efron, 1978]{Efron78}
Efron, B. (1978).
\newblock The geometry of exponential families.
\newblock {\em The Annals of Statistics}, 6:362--376.

\bibitem[Evans and Forcina, 2013]{EvansForcina11}
Evans, R.~J. and Forcina, A. (2013).
\newblock {T}wo algorithms for fitting constrained marginal models.
\newblock {\em Comput. Statist. Data Anal.}, 66:1--7.

\bibitem[Forcina, 2012]{Forcina2012}
Forcina, A. (2012).
\newblock {S}moothness of conditional independence models for discrete data.
\newblock {\em J. Multivariate Anal.}, 106:49--56.

\bibitem[Giudici and Passerone, 2002]{Giudici02}
Giudici, P. and Passerone, G. (2002).
\newblock {D}ata mining of association structures to model consumer behaviour.
\newblock {\em Computational Statistics and Data Analysis}, 38:533--541.

\bibitem[Klimova and Rudas, 2015]{KRipf1}
Klimova, A. and Rudas, T. (2015).
\newblock {I}terative {S}caling in {C}urved {E}xponential {F}amilies.
\newblock {\em Scand. J. Statist.}, 42:832--847.

\bibitem[Klimova and Rudas, 2016a]{KRJMA16}
Klimova, A. and Rudas, T. (2016a).
\newblock {O}n the closure of relational models.
\newblock {\em J. Multivariate Anal.}, 143:440--452.

\bibitem[Klimova and Rudas, 2016b]{KRarxiv16}
Klimova, A. and Rudas, T. (2016b).
\newblock Testing the fit of relational models.
\newblock {\em arXiv preprint arXiv:1612.02416}.

\bibitem[Klimova et~al., 2012]{KRD12}
Klimova, A., Rudas, T., and Dobra, A. (2012).
\newblock {R}elational models for contingency tables.
\newblock {\em J. Multivariate Anal.}, 104:159--173.

\bibitem[Pace and Salvan, 1997]{PaSa}
Pace, L. and Salvan, A. (1997).
\newblock {\em Principles of statistical inference}.
\newblock World Scientific, P O Box 128, SIngapore.

\end{thebibliography}
\end{document}